\newtheorem{theorem}{Theorem}
\newtheorem{definition}[theorem]{Definition}
\newtheorem{remark}[theorem]{Remark}
\newenvironment{proof}[1][Proof]{\noindent\textbf{#1.} }{\ \rule{0.5em}{0.5em}}
\begin{document}

\title{Heteroclinic and homoclinic solutions for nonlinear second-order
coupled systems with phi-Laplacians}
\author{Robert de Sousa$^{(\ddag )}$ and Feliz Minh\'{o}s$^{(\dag )}$ \\
%EndAName
$^{(\ddag )}${\small Faculdade de Ci\^{e}ncias e Tecnologia,}\\
{\small N$\acute{u}$cleo de Matem\'{a}tica e Aplica\c{c}\~{o}es (NUMAT),}\\
{\small Universidade de Cabo Verde. Campus de Palmarejo,}\\
{\small 279 Praia, Cabo Verde}\\
{\small robert.sousa@docente.unicv.edu.cv}\\
$^{(\dag )}${\small Departamento de Matem\'{a}tica, Escola de Ci\^{e}ncias e
Tecnologia,}\\
{\small Centro de Investiga\c{c}\~{a}o em Matem\'{a}tica e Aplica\c{c}\~{o}%
es (CIMA),}\\
{\small \ Instituto de Investiga\c{c}\~{a}o e Forma\c{c}\~{a}o Avan\c{c}ada,
}\\
{\small \ Universidade de \'{E}vora. Rua Rom\~{a}o Ramalho, 59, }\\
{\small \ 7000-671 \'{E}vora, Portugal}\\
{\small fminhos@uevora.pt}\\
}
\date{}
\maketitle

\begin{abstract}
In this paper we present sufficient conditions for the existence of
heteroclinic or homoclinic solutions for second order coupled systems of
differential equations on the real line.

We point out that it is required only conditions on the homeomorphisms and
no growth or asymptotic conditions are assumed on the nonlinearities.

The arguments make use of the fixed point theory, $L^{1}$-Carath\'{e}odory
functions and Schauder's fixed point theorem.

An application to a family of second order nonlinear coupled systems of two
degrees of freedom, shows the applicability of the main theorem.\bigskip

\textbf{2010 Mathematics Subject Classification:} 47H10, 34K25, 34B27,
34L30\bigskip

\textbf{Keywords:} Heteroclinic and homoclinic solutions, coupled systems, $%
L^{1}$-Carath\'{e}odory functions, homeomorphism, Schauder's fixed-point
theorem, operator theory.
\end{abstract}

\section{Introduction}

In this paper, we consider the second order coupled system on the real line
\begin{equation}
\left\{
\begin{array}{l}
\left( a(t)\phi \big(u^{\prime }(t)\big)\right) ^{\prime
}=f(t,u(t),v(t),u^{\prime }(t),v^{\prime }(t)),\text{ } \\
\\
\left( b(t)\psi \big(v^{\prime }(t)\big)\right) ^{\prime
}=h(t,u(t),v(t),u^{\prime }(t),v^{\prime }(t)),\text{ }t\in \mathbb{R},%
\end{array}%
\right.   \label{art8}
\end{equation}%
with $\phi $ and $\psi $ increasing homeomorphisms verifying some adequate
relations on their inverses, $a,b:\mathbb{R}\rightarrow (0,\,+\infty \lbrack
$ are continuous functions, $f,h:\mathbb{R}^{5}\rightarrow \mathbb{R}$ are $%
L^{1}$-Carath\'{e}odory functions, together with asymptotic conditions
\begin{equation}
u(-\infty )=A,\;u^{\prime }(+\infty )=0,\;\;v(-\infty )=B,\;\;v^{\prime
}(+\infty )=0  \label{cond8.1}
\end{equation}%
for $A,B\in \mathbb{R}.$

Heteroclinic trajectories play an important role in geometrical analysis of
dynamical systems, connecting unstable and stable equilibria having two or
more equilibrium points, \cite{Izhikevich}. In fact, the homoclinic or
heteroclinic orbits are a kind of spiral structures, which are general
phenomena in nature, \cite{Tao+Jun}. Graphical illustrations and a very
complete explanation on homoclinics and heteroclinics bifurcations can be
seen in \cite{Homburg+Sandstede}. A planar homoclinic theorem and
heteroclinic orbits, to analyze fluid models, is studied in \cite{Bertozzi}.
Applications of dynamic systems techniques to the problem of heteroclinic
connections and resonance transitions, are treated in \cite{W+M+J+S}, on
planar circular domains. To prove the existence of heteroclinic solutions,
for a class of non-autonomous second-order equations, see \cite{C.Alves,
Ellero+Zanolin, Marcelli+Papalini}. Topological, variational and
minimization methods to find heteroclinic connections can be found in \cite%
{Zelati+ Rabinowitz}.

On heteroclinic coupled systems, among many published works, we highlight
some of them:

In \cite{Aguiar+Ashwin}, Aguiar et al. consider the dynamics of small
networks of coupled cells, with one of the points, analyzed as invariant
subsets, can support robust heteroclinic attractors;

In \cite{Ashwin+Karabacak}, Ashwin and Karabacak study coupled phase
oscillators and discuss heteroclinic cycles and networks between partially
synchronized states and in \cite{Karabacak+Ashwin}, they analyze coupled
phase oscillators, highlighting a dynamic mechanism, nothing more than a
heteroclinic network;

In \cite{Ashwin+Orosz}, the authors investigate such heteroclinic network
between partially synchronized states, where the phases cluster are divided
into three groups;

Moreover, in \cite{Feng+Hu}, the authors present some applications, results,
methods and problems that have been recently reported and, in addition, they
suggest some possible research directions, and some problems for further
studies on homoclinics and heterocli\-nics.

Cabada and Cid, in \cite{Cabada+Cid}, study the following boundary value
problem on the real line
\begin{equation*}
\left\{
\begin{array}{l}
\left( \phi \big(u^{\prime }(t)\big)\right) ^{\prime }=f(t,u(t),u^{\prime
}(t)),\text{ on }\mathbb{R}, \\
u(-\infty )=-1,\;\;\;u(+\infty )=1,%
\end{array}%
\right.
\end{equation*}%
with a singular $\phi $-Laplacian operator where $f$ is a continuous
function that satisfies suitable symmetric conditions.

In \cite{Calamai}, Calamai discusses the solvability of the following
strongly nonli\-near problem:
\begin{equation*}
\left\{
\begin{array}{l}
\left( a(x(t))\phi \big(x^{\prime }(t)\big)\right) ^{\prime
}=f(t,x(t),x^{\prime }(t)),\;\;\;t\in \mathbb{R}, \\
x(-\infty )=\alpha ,\;\;\;x(+\infty )=\beta ,%
\end{array}%
\right.
\end{equation*}%
where $\alpha <\beta $, $\phi :(-r,\,r)\rightarrow \mathbb{R}$ is a general
increasing homeomorphism with bounded domain (singular $\phi $-Laplacian), $%
a $ is a positive, continuous function and $f$ is a Carath\'{e}odory
nonlinear function.

Recently, in \cite{Kajiwara}, Kajiwara proved the existence of a
heteroclinic solution of the FitzHugh-Nagumo type reaction-diffusion system,
under certain conditions on the heterogeneity.

Motivated by these works and applying the techniques suggested in \cite%
{YLiu+SChen, F+HC, F+heteroclinic, FM+Robert}, we apply the fixed point
theory, to obtain sufficient conditions for the existence of heteroclinic
solutions of the coupled system (\ref{art8}), (\ref{cond8.1}), assuming some
adequate conditions on $\phi ^{-1}$, $\psi ^{-1}$.

We emphasize that it is the first time where heteroclinic solutions for
second order coupled differential systems are considered for systems with
full nonlinearities depending on both unknown functions and their first
derivatives. An example illustrates the potentialities of our main result,
and an application to coupled nonlinear systems of two degrees of freedom
(2-DOF), shows the applicability of the main theorem.

This paper is organized as it follows: Section 2 contains some preliminary
results. In section 3 we present the main theorem: an existence result of,
at least, a pair of heteroclinic solutions. An application to a family of
coupled 2-DOF nonlinear systems is presented in last section.

\section{Notations and preliminary results}

Consider the following spaces
\begin{equation*}
X:=\left\{ x\in C^{1}\left( \mathbb{R}\right) :\,\underset{|t|\rightarrow
\infty }{\lim }x^{(i)}(t)\in \mathbb{R},\, i=0,1\right\},
\end{equation*}
equipped with the norm%
\begin{equation*}
\left\Vert x\right\Vert _{X}=\max \left\{ \left\Vert x\right\Vert _{\infty
},\left\Vert x^{\prime }\right\Vert _{\infty }\right\},
\end{equation*}%
where
\begin{equation*}
\left\Vert x\right\Vert _{\infty }:=\sup_{t\in \mathbb{R}}|x(t)|,
\end{equation*}%
and $X^2:=X\times X$ with%
\begin{equation*}
\left\Vert \left( u,v\right) \right\Vert _{X^{2}}=\max \left\{ \left\Vert
u\right\Vert _{X},\left\Vert v\right\Vert _{X}\right\} .
\end{equation*}

It can be proved that $(X,\left\Vert \cdot \right\Vert _{X})$ and $%
(X^2,\left\Vert \cdot \right\Vert _{X^{2}})$ are Banach spaces.

\begin{remark}
If $w \in X$ then $w^{\prime}(\pm \infty)=0.$  \label{Remkc}
\end{remark}

By solution of problem (\ref{art8}), (\ref{cond8.1}) we mean a pair $%
(u,v)\in X^{2}$ such that
\begin{equation*}
a(t)\phi (u^{\prime }(t))\in W^{1,1}(\mathbb{R})\text{ and }b(t)\psi
(v^{\prime }(t))\in W^{1,1}(\mathbb{R}),
\end{equation*}
verifying (\ref{art8}), (\ref{cond8.1}).

For the reader's convenience we consider the definition of $L^{1}-$ Carath%
\'{e}odory functions:

\begin{definition}
\label{Lcar7} A function $g:\mathbb{R} ^{5}\rightarrow \mathbb{R}$ is $%
L^{1}- $ Carath\'{e}odory if

\begin{enumerate}
\item[$i)$] for each $(x,y,z,w)\in \mathbb{R}^4$, $t\mapsto g(t,x,y,z,w)$ is
measurable on $\mathbb{R};$

\item[$ii)$] for a.e. $t\in \mathbb{R},$ $(x,y,z,w)\mapsto g(t,x,y,z,w)$ is
continuous on $\mathbb{R}^{4};$

\item[$iii)$] for each $\rho >0$, there exists a positive function $%
\vartheta _{\rho }\in L^{1}\left(\mathbb{R}\right)$ such that, whenever $%
x,y,z,w \in [-\rho,\,\rho]$, then

\begin{equation}
\left\vert g(t,x,y,z,w)\right\vert \leq \vartheta _{\rho }(t),\text{ }%
a.e.\;\,\text{{}}t\in \mathbb{R}.  \label{caract8}
\end{equation}
\end{enumerate}
\end{definition}

Along this chapter we assume that

\begin{itemize}
\item[(H1)] $\phi $,$\psi :\mathbb{R}\longrightarrow \mathbb{R}$ are
increasing homeomorphisms such that

\indent\textbf{a)} $\phi (\mathbb{R})=\mathbb{R},\;\;\,\phi (0)=0,\;\;\,\psi
(\mathbb{R})=\mathbb{R},\;\;\,\psi (0)=0;$

\indent\textbf{b)} $|\phi ^{-1}(x)|\leq \phi ^{-1}(|x|),$\;\;\, $|\psi
^{-1}(x)|\leq \psi ^{-1}(|x|).$

\item[(H2)] $a,b:\mathbb{R}\rightarrow (0,\,+\infty[$ are positive
continuous functions such that

\begin{equation*}
\underset{t\rightarrow \pm \infty }{\lim }\frac{1}{a(t)}\in \mathbb{R}\text{
and }\underset{t\rightarrow \pm \infty }{\lim }\frac{1}{b(t)}\in \mathbb{R}.
\end{equation*}
\end{itemize}

A convenient criterion for the compacity of the operators is given by next
theorem:

\begin{theorem}
\label{F+H}(\cite{F+HC}, Theorem 2.3) A set $M\subset X$ is relatively
compact if the following conditions hold:

\begin{enumerate}
\item[$i)$] both $\{t\rightarrow x(t):x\in M\}$ and $\{t\rightarrow
x^{\prime }(t):x\in M\}$ are uniformly bounded;

\item[$ii)$] both $\{t\rightarrow x(t):x\in M\}$ and $\{t\rightarrow
x^{\prime }(t):x\in M\}$ are equicontinuous on any compact interval of $%
\mathbb{R}$;

\item[$iii)$] both $\{t\rightarrow x(t):x\in M\}$ and $\{t\rightarrow
x^{\prime }(t):x\in M\}$ are equiconvergent at $\pm \infty $, that is, for
any given $\epsilon >0$, there exists $t_{\epsilon }>0$ such that
\begin{equation*}
\left\vert f(t)-f(\pm \infty )\right\vert <\epsilon ,\;\left\vert f^{\prime
}(t)-f^{\prime }(\pm \infty )\right\vert <\epsilon ,\forall |t|>t_{\epsilon
},f\in M.
\end{equation*}
\end{enumerate}
\end{theorem}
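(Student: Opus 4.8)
The plan is to prove this as an Arzel\`a--Ascoli-type theorem adapted to the unbounded domain $\mathbb{R}$, so the natural strategy is to show that every sequence $(x_n)\subset M$ admits a subsequence converging in the norm $\left\Vert \cdot \right\Vert _{X}$. The argument splits into a \emph{local} part, handled by the classical Arzel\`a--Ascoli theorem together with a diagonal extraction, and a \emph{tail} part at $\pm\infty$, where condition $iii)$ does the essential work.

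First I would fix a sequence $(x_n)\subset M$ and, for each $k\in\mathbb{N}$, apply the classical Arzel\`a--Ascoli theorem on the compact interval $[-k,k]$: by $i)$ and $ii)$ the families $\{x_n\}$ and $\{x_n'\}$ are uniformly bounded and equicontinuous there, so a subsequence converges uniformly on $[-k,k]$ for both $x_n$ and $x_n'$. A standard diagonal argument over $k=1,2,\dots$ then yields a single subsequence (still denoted $(x_n)$) and functions $x,y$ with $x_n\to x$ and $x_n'\to y$ uniformly on every compact subset of $\mathbb{R}$. Uniform local convergence of the derivatives together with pointwise convergence of the $x_n$ forces $y=x'$, so $x\in C^{1}(\mathbb{R})$ and $x_n\to x$ in the $C^1$-sense on compacta.

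Next I would show that the limit $x$ actually lies in $X$. Given $\epsilon>0$, condition $iii)$ provides $t_\epsilon>0$ with $|x_n(t)-x_n(\pm\infty)|<\epsilon$ and $|x_n'(t)-x_n'(\pm\infty)|<\epsilon$ for all $|t|>t_\epsilon$ and all $n$, uniformly in $n$. Using this equiconvergence at the single point $t=t_\epsilon$ together with the convergence of $(x_n(t_\epsilon))$ on the compact piece, the three-term estimate
\[
|x_n(\pm\infty)-x_m(\pm\infty)|\le |x_n(\pm\infty)-x_n(t_\epsilon)|+|x_n(t_\epsilon)-x_m(t_\epsilon)|+|x_m(t_\epsilon)-x_m(\pm\infty)|
\]
shows that the numerical sequences $(x_n(\pm\infty))$ and $(x_n'(\pm\infty))$ are Cauchy, hence convergent; passing to the limit one checks that $x$ and $x'$ have finite limits at $\pm\infty$, so $x\in X$.

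Finally, to upgrade local convergence to convergence in $X$, I would combine the two ingredients in a $3\epsilon$ argument. On $[-t_\epsilon,t_\epsilon]$ the local $C^1$ convergence makes $|x_n-x|$ and $|x_n'-x'|$ small for $n$ large; on the tail $|t|>t_\epsilon$ one writes
\[
|x_n(t)-x(t)|\le |x_n(t)-x_n(\pm\infty)|+|x_n(\pm\infty)-x(\pm\infty)|+|x(\pm\infty)-x(t)|,
\]
bounding the first and third terms by $\epsilon$ using the equiconvergence of the $x_n$ and of $x$, and the middle term using the convergence of the limits at infinity; the identical scheme handles $x_n'$. Taking the supremum over $t\in\mathbb{R}$ then gives $\left\Vert x_n-x\right\Vert _{X}\to 0$. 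The main obstacle is precisely this last step: ordinary Arzel\`a--Ascoli yields only uniform convergence on compacta, and it is the equiconvergence hypothesis $iii)$ --- together with verifying that the limit $x$ inherits it --- that controls the behaviour at infinity and closes the gap to norm convergence on all of $\mathbb{R}$.
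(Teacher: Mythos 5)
Your proof is correct, but note that there is nothing in the paper to compare it against: the paper does not prove this theorem at all, it imports it as a black box from (\cite{F+HC}, Theorem 2.3). Your argument --- Arzel\`a--Ascoli on $[-k,k]$ plus a diagonal extraction to get $C^1$-convergence on compacta, then using equiconvergence both to show the limit lies in $X$ (via the Cauchy estimate on $x_n(\pm\infty)$, $x_n'(\pm\infty)$) and to upgrade local convergence to convergence in $\left\Vert \cdot \right\Vert _{X}$ by a $3\epsilon$ splitting into $[-t_\epsilon,t_\epsilon]$ and the tails --- is the standard proof of this compactness criterion and is, in substance, the argument of the cited reference. The only slip is cosmetic: condition $iii)$ is assumed for $|t|>t_{\epsilon }$, not \emph{at} $t=t_{\epsilon }$, so your three-term estimate should be evaluated at some fixed point with $|t|>t_{\epsilon }$ (say $t_{\epsilon }+1$); nothing else in the argument changes.
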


The existence tool will be given by Schauder's fixed point theorem:

\begin{theorem}
\label{schauder} (\cite{zeidler}) Let $Y$ be a nonempty, closed, bounded and
convex subset of a Banach space $X$, and suppose that $P:Y\rightarrow Y$ is
a compact operator. Then $P$ has at least one fixed point in $Y$.
\end{theorem}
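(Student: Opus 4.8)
The plan is to establish this classical result by the standard finite-dimensional approximation argument, reducing it to Brouwer's fixed point theorem. First I would exploit the compactness of $P$: since $P(Y)$ is relatively compact it is totally bounded, so for every $\epsilon >0$ there is a finite $\epsilon$-net $\{y_{1},\dots ,y_{n}\}\subset P(Y)\subseteq Y$ such that the open balls $B(y_{i},\epsilon )$ cover $P(Y)$.

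Next I would construct the Schauder projection associated with this net. Define the continuous weights $\mu _{i}(z)=\max \{0,\,\epsilon -\left\Vert z-y_{i}\right\Vert \}$ and set
\begin{equation*}
P_{\epsilon }(x)=\frac{\sum_{i=1}^{n}\mu _{i}(Px)\,y_{i}}{\sum_{i=1}^{n}\mu _{i}(Px)}.
\end{equation*}
For each $x\in Y$ at least one weight is strictly positive (because the balls cover $P(Y)$), so $P_{\epsilon }$ is well defined and continuous, and by construction $\left\Vert P_{\epsilon }x-Px\right\Vert <\epsilon $ for all $x\in Y$. Moreover $P_{\epsilon }$ takes values in the convex hull $C_{\epsilon }:=\mathrm{conv}\{y_{1},\dots ,y_{n}\}$, which is a compact, convex subset of a finite-dimensional subspace and, since each $y_{i}\in Y$ and $Y$ is convex, is contained in $Y$.

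Then I would apply Brouwer's fixed point theorem to the restriction $P_{\epsilon }:C_{\epsilon }\rightarrow C_{\epsilon }$, obtaining $x_{\epsilon }\in C_{\epsilon }$ with $P_{\epsilon }x_{\epsilon }=x_{\epsilon }$. Taking $\epsilon =1/k$ produces a sequence $(x_{k})$ in $Y$. Since $P(Y)$ is relatively compact, the sequence $(Px_{k})$ has a subsequence converging to some $x^{\ast }$, and $x^{\ast }\in Y$ because $Y$ is closed. Along this subsequence, combining $x_{k}=P_{1/k}x_{k}$ with $\left\Vert P_{1/k}x_{k}-Px_{k}\right\Vert <1/k$ gives $x_{k}\rightarrow x^{\ast }$ as well; the continuity of $P$ then yields $Px^{\ast }=x^{\ast }$, the desired fixed point.

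The main obstacle is the construction and analysis of the Schauder projection $P_{\epsilon }$ — in particular verifying simultaneously that it is well defined, continuous, maps the finite-dimensional compact convex set $C_{\epsilon }$ into itself, and uniformly approximates $P$ — since this is precisely the step that converts the infinite-dimensional statement into one amenable to Brouwer's theorem. The final passage to the limit is then routine, resting entirely on the relative compactness of $P(Y)$ and the closedness of $Y$.
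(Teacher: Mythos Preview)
Your argument is the standard and correct proof of Schauder's fixed point theorem via finite-dimensional approximation and Brouwer's theorem; the Schauder projection, the invariance of the finite-dimensional convex hull $C_{\epsilon}\subseteq Y$, and the compactness-based passage to the limit are all handled properly.

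As for the comparison: the paper does not actually prove this statement. Theorem~\ref{schauder} is quoted from \cite{zeidler} as a known tool and is invoked without proof in the existence argument of Theorem~\ref{main8}. So there is no ``paper's own proof'' to weigh your approach against; you have simply supplied the classical proof that the authors chose to cite rather than reproduce.
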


\section{Existence of heteroclinics}

In this section we prove the existence for a pair of heteroclinic solutions
to the coupled system (\ref{art8}), (\ref{cond8.1}), for some constants $%
A,B\in \mathbb{R}.$

\begin{theorem}
\label{main8} Let $\phi ,\,\psi :\mathbb{R}\rightarrow \mathbb{R}$ be
increasing homeomorphisms and $a,b:\mathbb{R}\rightarrow (0,\,+\infty
\lbrack $ continuous functions satisfying (H1) and (H2). Assume that $f,h:%
\mathbb{R}^{5}\rightarrow \mathbb{R}$ are $L^{1}-$Carath\'{e}odory functions
and there is $R>0$ and $\vartheta _{R},\;\theta _{R}\in L^{1}\left( \mathbb{R%
}\right) $ such that
\begin{equation}
\int_{-\infty }^{+\infty }\phi ^{-1}\left( \frac{\int_{-\infty }^{+\infty
}\vartheta _{R}(r)dr}{a(s)}\right) ds<+\infty ,  \label{majmiu1}
\end{equation}%
\begin{equation}
\int_{-\infty }^{+\infty }\psi ^{-1}\left( \frac{\int_{-\infty }^{+\infty
}\theta _{R}(r)dr}{b(s)}\right) ds<+\infty ,  \label{majmiu2}
\end{equation}%
with
\begin{equation*}
\sup_{t\in \mathbb{R}}\phi ^{-1}\left( \frac{\int_{-\infty }^{+\infty
}\vartheta _{R}(r)dr}{a(t)}\right) ds<+\infty ,\;\sup_{t\in \mathbb{R}}\psi
^{-1}\left( \frac{\int_{-\infty }^{+\infty }\theta _{R}(r)dr}{b(t)}\right)
ds<+\infty ,
\end{equation*}%
\begin{equation}
|f(t,x,y,z,w)|\leq \vartheta _{R}(t),  \label{majmiuL1}
\end{equation}%
\begin{equation}
|h(t,x,y,z,w)|\leq \theta _{R}(t),  \label{majmiuL2}
\end{equation}%
whenever $x,y,z,w\in \lbrack -R,\,R].$\newline
Then for given $A,B\in \mathbb{R}$ the problem (\ref{art8}), (\ref{cond8.1})
has, at least, a pair of heteroclinic solutions $(u,v)\in X^{2}$.
\end{theorem}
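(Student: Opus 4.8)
The plan is to recast the coupled boundary value problem as a fixed point problem in $X^2$ and apply Schauder's theorem (Theorem \ref{schauder}), using the compactness criterion of Theorem \ref{F+H}. First I would integrate each equation. Since $a(t)\phi(u'(t))\in W^{1,1}(\mathbb{R})$ has a limit at $+\infty$, and the asymptotic condition $u'(+\infty)=0$ together with (H1)a) gives $\phi(u'(+\infty))=0$, integrating $\left(a(t)\phi(u'(t))\right)'=f$ from $t$ to $+\infty$ yields
\begin{equation*}
a(t)\phi(u'(t))=-\int_t^{+\infty}f(r,u,v,u',v')\,dr,
\end{equation*}
so that $u'(t)=\phi^{-1}\!\left(-\frac{1}{a(t)}\int_t^{+\infty}f\,dr\right)$. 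Integrating again from $-\infty$ and imposing $u(-\infty)=A$ produces an explicit integral representation for $u$, and symmetrically for $v$ using $\psi^{-1}$, $b$, $h$ and $v(-\infty)=B$. This defines an operator $P=(P_1,P_2):X^2\to X^2$ whose fixed points are exactly the solutions of (\ref{art8}), (\ref{cond8.1}).

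Next I would exhibit an invariant ball. Using (H1)b) to pull the absolute value inside $\phi^{-1}$ and $\psi^{-1}$, together with the $L^1$-Carath\'eodory bounds (\ref{majmiuL1})--(\ref{majmiuL2}), I can dominate $\|(P_1(u,v))'\|_\infty$ by $\sup_{t}\phi^{-1}\!\left(\frac{\int_{-\infty}^{+\infty}\vartheta_R(r)\,dr}{a(t)}\right)$ and $\|P_1(u,v)\|_\infty$ by $|A|$ plus the convergent integral (\ref{majmiu1}), and analogously for $P_2$ via (\ref{majmiu2}). The point of hypotheses (\ref{majmiu1})--(\ref{majmiu2}) and the accompanying suprema is precisely to guarantee these bounds are finite; choosing $R$ large enough that the resulting bound on $\|(u,v)\|_{X^2}$ does not exceed $R$ makes the closed ball $\overline{B}_R\subset X^2$ invariant under $P$, so the domination by $\vartheta_R,\theta_R$ is legitimate on that ball. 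This set is nonempty, closed, bounded and convex, as Schauder requires.

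Then I would verify that $P$ is compact on $\overline{B}_R$ by checking the three conditions of Theorem \ref{F+H} for each component. Uniform boundedness of $\{P_i(u,v)\}$ and their derivatives follows from the estimates just obtained. Equicontinuity on compact intervals follows from the integral form: differences $u(t_2)-u(t_1)$ and $u'(t_2)-u'(t_1)$ are controlled by integrals of the dominating functions over small intervals and by uniform continuity of $1/a$, $\phi^{-1}$ on compacts. Equiconvergence at $\pm\infty$ uses that $\int_{-\infty}^{+\infty}\vartheta_R$, $\int_{-\infty}^{+\infty}\theta_R$ are finite together with (H2), so the tails of the defining integrals vanish uniformly; Remark \ref{Remkc} then guarantees the derivatives converge to $0$. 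Continuity of $P$ on $\overline{B}_R$ comes from the Lebesgue dominated convergence theorem applied to the integrands, invoking the continuity part ii) of the Carath\'eodory definition.

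The main obstacle I anticipate is the interplay between the two integrations needed to bound $\|P_1(u,v)\|_\infty$: the inner integral of $f$ is handled by the $L^1$ bound, but the outer integration of $\phi^{-1}$ of that quantity requires exactly the summability condition (\ref{majmiu1}), and one must be careful that the nonlinearity of $\phi^{-1}$ (it is not sublinear in general) does not let the estimate escape. Hypothesis (H1)b) is what makes the absolute-value manipulation valid, and the separate sup-conditions are what keep the derivative bounds finite; the delicate point is organizing these so that a single $R$ closes the fixed point argument. Once the invariant ball and compactness are in place, Schauder's theorem yields a fixed point, and unwinding the integral identities confirms it is a genuine pair of heteroclinic solutions satisfying (\ref{cond8.1}).
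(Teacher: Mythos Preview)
Your proposal is correct and follows essentially the same strategy as the paper: reformulate the problem as a fixed-point equation for an integral operator on $X^{2}$, verify compactness of the operator on a bounded set using the three conditions of Theorem~\ref{F+H}, exhibit an invariant closed convex ball, and apply Schauder's theorem. The only cosmetic difference is that you integrate the first-order relation from $t$ to $+\infty$ (using $u'(+\infty)=0$ directly to fix the constant of integration), whereas the paper integrates from $-\infty$ to $t$ and recovers $u'(+\infty)=0$ a posteriori from membership in $X$ via Remark~\ref{Remkc}; the resulting estimates and the checks of uniform boundedness, equicontinuity and equiconvergence are otherwise identical.
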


\begin{proof}
Define the operators $T_{1}:X^{2}\,\rightarrow X$, $T_{2}:X^{2}\,\rightarrow
X$ and $T\,:\,X^{2}\rightarrow X^{2}$ by
\begin{equation}
T\left( u,v\right) =\left( T_{1}\left( u,v\right) ,T_{2}\left( u,v\right)
\right) ,  \label{T8}
\end{equation}%
with
\begin{eqnarray*}
\left( T_{1}\left( u,v\right) \right) \left( t\right)  &=&\int_{-\infty
}^{t}\phi ^{-1}\left( \frac{\int_{-\infty }^{s}f(r,u(r),v(r),u^{\prime
}(r),v^{\prime }(r))dr}{a(s)}\right) ds+A, \\
\left( T_{2}\left( u,v\right) \right) \left( t\right)  &=&\int_{-\infty
}^{t}\psi ^{-1}\left( \frac{\int_{-\infty }^{s}h(r,u(r),v(r),u^{\prime
}(r),v^{\prime }(r))dr}{b(s)}\right) ds+B,
\end{eqnarray*}%
with $A$ and $B$ given by (\ref{cond8.1}).

In order to apply Theorem \ref{schauder}, we shall prove that $T$ is compact
and has a fixed point.

To simplify the proof, we detail the arguments only for $T_{1}\left(
u,v\right)$, as for the operator $T_{2}\left( u,v\right) $ the technique is
similar.

To be clear, we divide the proof into claims \textbf{(i)-(v)}.\medskip

\textbf{(i)} $T$\textit{\ is well defined and continuous in }$X^2$.\medskip

Let $(u,v)\in X^{2}$ and take $\rho >0$ such that $\left\Vert \left(
u,v\right) \right\Vert _{X^{2}}<\rho $. As $f$ is a $L^{1}-$ Carath\'{e}%
odory function, there exists a positive function $\vartheta _{\rho }\in
L^{1}\left( \mathbb{R}\right) $ verifying (\ref{majmiuL1}). So,%
\begin{eqnarray*}
&&\int_{-\infty }^{t}|f(r,u(r),v(r),u^{\prime }(r),v^{\prime }(r))|dr \\
&\leq &\int_{-\infty }^{+\infty }|f(r,u(r),v(r),u^{\prime }(r),v^{\prime
}(r))|dr\leq \int_{-\infty }^{+\infty }\vartheta _{\rho }(t)dt<+\infty .
\end{eqnarray*}

So, $T_{1}$ is continuous on $X$. Furthermore,
\begin{equation*}
\left( T_{1}\left( u,v\right) \right) ^{\prime }(t)=\phi ^{-1}\left( \frac{%
\int_{-\infty }^{t}f(r,u(r),v(r),u^{\prime }(r),v^{\prime }(r))dr}{a(t)}%
\right)
\end{equation*}%
is also continuous on $X$ and, therefore, $T_{1}\left( u,v\right) \in C^{1}(%
\mathbb{R})$.

By (\ref{cond8.1}), (\ref{majmiu1}), (\ref{majmiuL1}) and (H2),
\begin{eqnarray*}
&&\lim_{t\rightarrow -\infty }T_{1}\left( u,v\right) (t) \\
&=&\lim_{t\rightarrow -\infty }\int_{-\infty }^{t}\phi ^{-1}{\left( \frac{%
\int_{-\infty }^{s}f(r,u(r),v(r),u^{\prime }(r),v^{\prime }(r))dr}{a(s)}%
\right) }ds+A=A,
\end{eqnarray*}%
\begin{eqnarray*}
&&\lim_{t\rightarrow +\infty }T_{1}\left( u,v\right) (t) \\
&=&\int_{-\infty }^{+\infty }\phi ^{-1}{\left( \frac{\int_{-\infty
}^{s}f(r,u(r),v(r),u^{\prime }(r),v^{\prime }(r))dr}{a(s)}\right) }%
ds+A<+\infty ,
\end{eqnarray*}%
and%
\begin{eqnarray*}
\lim_{t\rightarrow \pm \infty }\left( T_{1}\left( u,v\right) (t)\right)
^{\prime } &=&\lim_{t\rightarrow \pm \infty }\phi ^{-1}{\left( \frac{%
\int_{-\infty }^{t}f(r,u(r),v(r),u^{\prime }(r),v^{\prime }(r))dr}{a(t)}%
\right) } \\
&\leq &\underset{t\rightarrow \pm \infty }{\lim }\phi ^{-1}\left( \frac{%
\int_{-\infty }^{+\infty }\vartheta _{\rho }(r)dr}{a(t)}\right) <+\infty .
\end{eqnarray*}

Therefore, $T_{1}\left( u,v\right) \in X$, and, by the same arguments, $%
T_{2}\left( u,v\right) \in X$. So, $T\left( u,v\right) \in X^2$.\medskip

\textbf{(ii)} $TM$\textit{\ is uniformly bounded on }$M\subseteq X^2$,
\textit{\ for some bounded $M$}.\medskip

Let $M$ be a bounded set of $X^2$, defined by
\begin{equation}
M:=\{(u,v)\in X^2:\max \left\{ \left\Vert u\right\Vert _{\infty },\left\Vert
u^{\prime }\right\Vert _{\infty },\left\Vert v\right\Vert _{\infty
},\left\Vert v^{\prime }\right\Vert _{\infty }\right\} \leq \rho_{1}\},
\label{ro8}
\end{equation}%
for some $\rho_{1}>0$.

By (\ref{majmiu1}), (\ref{majmiuL1}), (H1) and (H2), we have {\small
\begin{eqnarray*}
&&\Vert T_{1}\left( u,v\right) (t)\Vert _{\infty } \\
&=&\sup_{t\in \mathbb{R}}\left( \left\vert \int_{-\infty }^{t}\phi
^{-1}\left( \frac{\int_{-\infty }^{s}f(r,u(r),v(r),u^{\prime }(r),v^{\prime
}(r))dr}{a(s)}\right) ds\right. +A\right\vert \\
&\leq &\sup_{t\in \mathbb{R}}\int_{-\infty }^{t}\left\vert \phi ^{-1}\left(
\frac{\int_{-\infty }^{s}f(r,u(r),v(r),u^{\prime }(r),v^{\prime }(r))dr}{a(s)%
}\right) \right\vert ds+|A| \\
&\leq &\sup_{t\in \mathbb{R}}\int_{-\infty }^{t}\phi ^{-1}\left( \frac{%
\int_{-\infty }^{s}\left\vert f(r,u(r),v(r),u^{\prime }(r),v^{\prime
}(r))\right\vert dr}{a(s)}\right) ds+|A| \\
&\leq &\int_{-\infty }^{+\infty }\phi ^{-1}\left( \frac{\int_{-\infty
}^{+\infty }\vartheta _{\rho _{1}}(r)dr}{a(s)}\right) ds+|A|<+\infty ,
\end{eqnarray*}%
} and%
\begin{eqnarray*}
&&\Vert \left( T_{1}\left( u,v\right) \right) ^{\prime }(t)\Vert _{\infty }
\\
&=&\sup_{t\in \mathbb{R}}\left\vert \phi ^{-1}\left( \frac{\int_{-\infty
}^{t}f(r,u(r),v(r),u^{\prime }(r),v^{\prime }(r))dr}{a(t)}\right) \right\vert
\\
&\leq &\sup_{t\in \mathbb{R}}\phi ^{-1}\left( \frac{\int_{-\infty
}^{t}\left\vert f(r,u(r),v(r),u^{\prime }(r),v^{\prime }(r))\right\vert dr}{%
a(t)}\right) \\
&\leq &\sup_{t\in \mathbb{R}}\phi ^{-1}\left( \frac{\int_{-\infty }^{+\infty
}\vartheta _{\rho _{1}}(r)dr}{a(t)}\right) <+\infty .
\end{eqnarray*}

So, $\Vert T_{1}\left( u,v\right) (t)\Vert _{X}<+\infty $, that is, $T_{1}M$
is uniformly bounded on $X$.

By similar arguments, $T_{2}$ is uniformly bounded on $X$. Therefore $TM$ is
uniformly bounded on $X^2$.\medskip

\textbf{(iii)} $TM$\textit{\ is equicontinuous on }$X^2.$\newline

Let $t_{1},t_{2}\in \lbrack -K,K]\subseteq \mathbb{R}$ for some $K>0$, and
suppose, without loss of generality, that $t_{1}\leq t_{2}.$ Thus, by (\ref%
{majmiu1}), (\ref{majmiuL1}) and (H1),

\begin{eqnarray*}
&&\left\vert T_{1}\left( u,v\right) (t_{1})-T_{1}\left( u,v\right)
(t_{2})\right\vert \\
&=&\left\vert \int_{-\infty }^{t_{1}}\phi ^{-1}\left( \frac{\int_{-\infty
}^{s}f(r,u(r),v(r),u^{\prime }(r),v^{\prime }(r))dr}{a(s)}\right) ds\right.
\\
&&-\int_{-\infty }^{t_{2}}\phi ^{-1}\left( \frac{\int_{-\infty
}^{s}f(r,u(r),v(r),u^{\prime }(r),v^{\prime }(r))dr}{a(s)}\right) \left.
\frac{{}}{{}}ds\right\vert \\
&\leq &\int_{t_{1}}^{t_{2}}\phi ^{-1}\left( \frac{\int_{-\infty
}^{s}|f(r,u(r),v(r),u^{\prime }(r),v^{\prime }(r))dr|}{a(s)}\right) ds \\
&\leq &\int_{t_{1}}^{t_{2}}\phi ^{-1}\left( \frac{\int_{-\infty }^{+\infty
}\vartheta _{\rho _{1}}(r)dr}{a(s)}\right) ds\rightarrow 0,
\end{eqnarray*}%
uniformly for $(u,v)\in M$, as $t_{1}\rightarrow t_{2}$, and
\begin{eqnarray*}
&&\left\vert \left( T_{1}\left( u,v\right) \right) ^{\prime }(t_{1})-\left(
T_{1}\left( u,v\right) \right) ^{\prime }(t_{2})\right\vert \\
&=&\left\vert \phi ^{-1}\left( \frac{\int_{-\infty
}^{t_{1}}f(r,u(r),v(r),u^{\prime }(r),v^{\prime }(r))dr}{a(t_{1})}\right)
\right. \\
&&\left. -\phi ^{-1}\left( \frac{\int_{-\infty
}^{t_{2}}f(r,u(r),v(r),u^{\prime }(r),v^{\prime }(r))dr}{a(t_{2})}\right)
\right\vert \rightarrow 0,
\end{eqnarray*}%
uniformly for $(u,v)\in M$, as $t_{1}\rightarrow t_{2}$.

Therefore, $T_{1}M$ is equicontinuous on $X$. Analogously, it can be proved
that $T_{2}M$ is equicontinuous on $X$. So, $TM$ is equiconti\-nuous on $X^2$%
.\medskip

\textbf{(iv)} $TM$ \textit{is equiconvergent at $t=\pm \infty $}.\medskip

Let $(u,v)\in M$. For the operator $T_{1}$, we have, by (\ref{majmiu1}), (%
\ref{majmiuL1}) and (H1),
\begin{eqnarray*}
&&\left\vert T_{1}\left( u,v\right) (t)-\lim_{t\rightarrow -\infty
}T_{1}\left( u,v\right) (t)\right\vert \\
&=&\left\vert \int_{-\infty }^{t}\phi ^{-1}\left( \frac{\int_{-\infty
}^{s}f(r,u(r),v(r),u^{\prime }(r),v^{\prime }(r))dr}{a(s)}\right)
ds\right\vert \\
&\leq &\int_{-\infty }^{t}\phi ^{-1}\left( \frac{\int_{-\infty }^{+\infty
}\vartheta _{\rho _{1}}(r)dr}{a(s)}\right) ds\rightarrow 0,
\end{eqnarray*}%
uniformly in $(u,v)\in M$, as $t\rightarrow -\infty $, and,
\begin{eqnarray*}
&&\left\vert T_{1}\left( u,v\right) (t)-\lim_{t\rightarrow +\infty
}T_{1}\left( u,v\right) (t)\right\vert \\
&=&\left\vert \int_{-\infty }^{t}\phi ^{-1}\left( \frac{\int_{-\infty
}^{s}f(r,u(r),v(r),u^{\prime }(r),v^{\prime }(r))dr}{a(s)}\right) ds\right.
\\
&-&\int_{-\infty }^{+\infty }\phi ^{-1}\left( \frac{\int_{-\infty
}^{s}f(r,u(r),v(r),u^{\prime }(r),v^{\prime }(r))dr}{a(s)}\right) ds\left.
\frac{{}}{{}}\right\vert \\
&=&\left\vert \int_{t}^{+\infty }\phi ^{-1}\left( \frac{\int_{-\infty
}^{s}f(r,u(r),v(r),u^{\prime }(r),v^{\prime }(r))dr}{a(s)}\right)
ds\right\vert \\
&\leq &\int_{t}^{+\infty }\phi ^{-1}\left( \frac{\int_{-\infty }^{+\infty
}\vartheta _{\rho _{1}}(r)dr}{a(s)}\right) ds\rightarrow 0,
\end{eqnarray*}%
uniformly in $(u,v)\in M$, as $t\rightarrow +\infty $.

For the derivative it follows that,
\begin{eqnarray*}
&&\left\vert \left( T_{1}\left( u,v\right) \right) ^{\prime
}(t)-\lim_{t\rightarrow +\infty }\left( T_{1}\left( u,v\right) \right)
^{\prime }(t)\right\vert \\
&=&\left\vert \phi ^{-1}\left( \frac{\int_{-\infty
}^{t}f(r,u(r),v(r),u^{\prime }(r),v^{\prime }(r))dr}{a(t)}\right) \right. \\
&&\left. -\phi ^{-1}\left( \lim_{t\rightarrow +\infty }\frac{\int_{-\infty
}^{+\infty }f(r,u(r),v(r),u^{\prime }(r),v^{\prime }(r))dr}{a(t)}\right)
\right\vert \rightarrow 0,
\end{eqnarray*}%
uniformly in $(u,v)\in M$, as $t\rightarrow +\infty $, and%
\begin{eqnarray*}
&&\left\vert \left( T_{1}\left( u,v\right) \right) ^{\prime
}(t)-\lim_{t\rightarrow -\infty }\left( T_{1}\left( u,v\right) \right)
^{\prime }(t)\right\vert \\
&=&\left\vert \phi ^{-1}{\scriptstyle\left( \frac{\int_{-\infty
}^{t}f(r,u(r),v(r),u^{\prime }(r),v^{\prime }(r))dr}{a(t)}\right) }%
\right\vert \\
&\leq &\phi ^{-1}\left( \left\vert {\frac{\int_{-\infty
}^{t}f(r,u(r),v(r),u^{\prime }(r),v^{\prime }(r))dr}{a(t)}}\right\vert
\right) \\
&\leq &\phi ^{-1}\left( {\frac{\int_{-\infty }^{t}\vartheta _{\rho _{1}}(r)dr%
}{\left\vert a(t)\right\vert }}\right) \rightarrow 0
\end{eqnarray*}%
uniformly in $(u,v)\in M$, as $t\rightarrow -\infty $.

Therefore, $T_{1}M$ is equiconvergent at $\pm \infty $ and, following a
simi\-lar technique, we can prove that $T_{2}M$ is equiconvergent at $\pm
\infty $, too. So, $TM$ is equiconvergent at $\pm \infty $.

By Theorem \ref{F+H}, $TM$ is relatively compact. \medskip

\textbf{(v)} $T:X\rightarrow X$\textit{\ has a fixed point}$.\medskip $

In order to apply Schauder's fixed point theorem for operator $T\left(
u,v\right) ,$ we need to prove that $TD\subset D,$ for some closed, bounded
and convex $D\subset X^2.$

Consider
\begin{equation*}
D:=\left\{ (u,v)\in X^{2}:\left\Vert (u,v)\right\Vert _{X^{2}}\leq \rho
_{2}\right\} ,
\end{equation*}%
with $\rho _{2}>0$ such that
\begin{equation*}
\rho _{2}:=\max \left\{
\begin{array}{c}
\rho _{1},\;\int_{-\infty }^{+\infty }\phi ^{-1}\left( \frac{\int_{-\infty
}^{+\infty }\vartheta _{\rho _{2}}(r)dr}{a(s)}\right) ds+|A|, \\
\\
\int_{-\infty }^{+\infty }\psi ^{-1}\left( \frac{\int_{-\infty }^{+\infty
}\theta _{\rho _{2}}(r)dr}{b(s)}\right) ds+|B|, \\
\\
\sup_{t\in \mathbb{R}}\phi ^{-1}\left( \frac{\int_{-\infty }^{+\infty
}\vartheta _{\rho _{2}}(r)dr}{a(t)}\right) ,\;\sup_{t\in \mathbb{R}}\psi
^{-1}\left( \frac{\int_{-\infty }^{+\infty }\theta _{\rho _{2}}(r)dr}{b(t)}%
\right)%
\end{array}%
\right\}
\end{equation*}%
with $\rho _{1}$ given by (\ref{ro8}).

Following similar arguments as in \textbf{(ii)}, we have, for $(u,v)\in D,$%
\begin{eqnarray*}
\left\Vert T(u,v)\right\Vert _{X^{2}} &=&\left\Vert \left(
T_{1}(u,v),T_{2}\left( u,v\right) \right) \right\Vert _{X^{2}} \\
&=&\max \left\{ \left\Vert T_{1}\left( u,v\right) \right\Vert
_{X},\;\left\Vert T_{2}\left( u,v\right) \right\Vert _{X}\right\}  \\
&=&\max \left\{
\begin{array}{c}
\left\Vert T_{1}\left( u,v\right) \right\Vert _{\infty },\;\left\Vert \left(
T_{1}\left( u,v\right) \right) ^{\prime }\right\Vert _{\infty }, \\
\left\Vert T_{2}\left( u,v\right) \right\Vert _{\infty },\;\left\Vert \left(
T_{2}\left( u,v\right) \right) ^{\prime }\right\Vert _{\infty }%
\end{array}%
\right\} \leq \rho _{2},
\end{eqnarray*}%
and $TD\subset D$.

By Theorem \ref{schauder}, the operator $T\left( u,v\right) =\left(
T_{1}\left( u,v\right) ,T_{2}\left( u,v\right) \right) $ has a fixed point $%
(u,v)\in X^{2}$.

By standard arguments, it can be proved that this fixed point defines a pair
of heteroclinic or homoclinic solutions of problem (\ref{art8}), (\ref%
{cond8.1}).
\end{proof}

\begin{remark}
If
\begin{equation*}
\int_{-\infty }^{+\infty }\phi ^{-1}\left( \frac{\int_{-\infty
}^{s}f(r,u(r),v(r),u^{\prime }(r),v^{\prime }(r))dr}{a(s)}\right) ds=0
\end{equation*}%
and%
\begin{equation*}
\int_{-\infty }^{+\infty }\psi ^{-1}\left( \frac{\int_{-\infty
}^{s}h(r,u(r),v(r),u^{\prime }(r),v^{\prime }(r))dr}{b(s)}\right) ds=0.
\end{equation*}
the solutions $(u,\,v)\in X^{2}$ of problem (\ref{art8}), (\ref{cond8.1}),
will be a pair of homoclinic solutions.
\end{remark}

\section{Application to coupled systems of nonlinear 2-DOF model}

Generic nonlinear coupled systems of two degrees of freedom (2-DOF), are
especially important in Physics and Mechanics. For exam\-ple in \cite%
{Mikhlin+Perepelkin}, the authors use this type of system to investigate the
transient in a system containing a linear oscillator, li\-nearly coupled to
an essentially nonlinear attachment with a comparatively small mass. The
family of coupled non-linear systems of 2-DOF is used to study the global
bifurcations in the motion of an externally forced coupled nonlinear
oscillatory system or for the nonlinear vibration absorber subjected to
periodic excitation, see \cite{Malhotra}. Moreover, in \cite{Ariaratnam+Xie}%
, the authors deal with the stochastic moment stability of such systems.

Motivated by these works, in this section we consider an appli\-cation of
system (\ref{art8}), (\ref{cond8.1}), to a family of coupled non-linear
systems of 2-DOF model, given by the nonlinear coupled system (see \cite%
{Malhotra})

\begin{equation}
\left\{
\begin{array}{l}
\left((1+t^4) \left( q_{1}^{\prime }(t)\right) ^{3}\right) ^{\prime }=\frac{%
t^4}{(1+t^6)^2}\left[ 2\zeta \omega _{0}\left( q_{1}^{\prime }(t)\right)
^{3}+\omega _{0}^{2}q_{1}(t)+\gamma (\left( q_{1}(t)\right) ^{3}\right. \\
\;\;\;\;\;\;\;\;\;\;\;\;\;\;\;\;\;\;\;\;\;\;\;\;\;\;\;\;\;\;\;\;\;\;\;\;\;\;%
\left. -3d^{2}q_{1}(t)q_{2}(t))+\cos (t)\right] ,\text{ } \\
\\
\tau ^{2}\left((1+t^4)\left( q_{2}^{\prime }(t)\right) ^{3}\right) ^{\prime
}=\frac{t^4}{(1+t^6)^2} \left[ 2\zeta \omega _{0}\left( q_{2}^{\prime
}(t)\right) ^{3}+\omega _{0}^{2}q_{2}(t)+\gamma (d^{2}\left( q_{2}(t)\right)
^{3}\right. \\
\;\;\;\;\;\;\;\;\;\;\;\;\;\;\;\;\;\;\;\;\;\;\;\;\;\;\;\;\;\;\;\;\;\;\;\;\;\;%
\;\;\left. -3\left( q_{1}(t)\right) ^{2}q_{2}(t))\right] ,\text{ }t\in
\mathbb{R},%
\end{array}%
\right.  \label{EQAp1}
\end{equation}%
where

\begin{itemize}
\item $q_{1}(t)$ and $q_{2}(t)$ represent the generalized coordinates;

\item $d,\;\tau ,\;\gamma $ are positive constant coefficients which depend
on the characteristics of the physical or mechanical system under
consideration;

\item $\cos(t)$ is related to the type of excitation of the system under%
\newline
consideration;

\item $\zeta ,\omega _{0}$, are the damping coefficient and the frequency,
respectively.
\end{itemize}

As the asymptotic conditions we consider%
\begin{equation}
q_{1}(-\infty )=A,\;\,q_{1}^{\prime }(+\infty )=0,\text{ }q_{2}(-\infty )=B,%
\text{ }q_{2}^{\prime }(+\infty )=0,  \label{BCap}
\end{equation}%
with $A,B\in \mathbb{R},$ and, moreover, assume that the real coefficients $%
\zeta $, $\omega _{0}$, $\gamma $, $d$, $r$ are such that the integrals
\begin{equation}
\int_{-\infty }^{+\infty }\left( \sqrt[3]{\frac{\int_{-\infty }^{s}\frac{%
r^{4}}{(1+r^{6})^{2}}\left[
\begin{array}{c}
2\zeta \omega _{0}\left( q_{1}^{\prime }(r)\right) ^{3}+\omega
_{0}^{2}q_{1}(r)+\gamma (\left( q_{1}(r)\right) ^{3} \\
-3d^{2}q_{1}(r)q_{2}(r))+\cos (r)%
\end{array}%
\right] dr}{1+s^{4}}}\right) ds  \label{BAap}
\end{equation}%
and%
\begin{equation}
\int_{-\infty }^{+\infty }\left( \sqrt[3]{\frac{\int_{-\infty }^{s}\frac{%
r^{4}}{\tau ^{2}(1+r^{6})^{2}}\left[
\begin{array}{c}
2\zeta \omega _{0}\left( q_{2}^{\prime }(r)\right) ^{3}+\omega
_{0}^{2}q_{2}(r)+\gamma (d^{2}\left( q_{2}(r)\right) ^{3} \\
-3\left( q_{1}(r)\right) ^{2}q_{2}(r))%
\end{array}%
\right] dr}{1+s^{4}}}\right) ds.  \label{CDap}
\end{equation}%
are finite.

It is clear that (\ref{EQAp1}) is a particular case of (\ref{art8}) with:
\begin{equation*}
\phi (z)=\psi (z)=z^{3},\;\,a(t)=b(t)=1+t^{4},
\end{equation*}%
$f,h:\mathbb{R}^{5}\rightarrow \mathbb{R}$ are $L^{1}$-Carath\'{e}odory
functions where
\begin{eqnarray*}
f(t,x,y,z,w) &=&\frac{t^{4}}{(1+t^{6})^{2}}\left( 2\zeta \omega
_{0}z^{3}+\omega _{0}^{2}x+\gamma x^{3}-3d^{2}xy+\cos (t)\right)  \\
&\leq &\frac{t^{4}}{\left( t^{6}+1\right) ^{2}}\left( 2\left\vert \zeta
\omega _{0}\right\vert \rho ^{3}+\omega _{0}^{2}\rho +\gamma \rho
^{3}+3d^{2}\rho ^{2}+1\right)  \\
:= &&\delta _{\rho }(t)
\end{eqnarray*}%
\begin{eqnarray*}
h(t,x,y,z,w) &=&\frac{t^{4}}{\tau ^{2}(1+t^{6})^{2}}\left( 2\zeta \omega
_{0}w^{3}+\omega _{0}^{2}y+\gamma d^{2}y^{3}-3x^{2}y\right)  \\
&\leq &\frac{t^{4}}{\tau ^{2}(t^{6}+1)^{2}}\left( 2\left\vert \zeta \omega
_{0}\right\vert \rho ^{3}+\omega _{0}^{2}\rho +\gamma d^{2}\rho ^{3}+3\rho
^{3}\right)  \\
:= &&\varepsilon _{\rho }(t)
\end{eqnarray*}%
where $\delta _{\rho }(t)$ and $\varepsilon _{\rho }(t)$ are functions in $%
L^{1}(\mathbb{R}),$ for $\rho >0$ such that%
\begin{equation}
\rho :=\max \left\{ \left\vert x\right\vert ,\left\vert y\right\vert
,\left\vert z\right\vert ,\left\vert w\right\vert \right\} .  \label{roapl}
\end{equation}

Moreover, conditions (H1) and (H2) hold as,

\begin{itemize}
\item $\phi(\mathbb{R})=\psi(w)=\mathbb{R}$ and $\phi(0)=\psi(0)=0$;

\item $|\phi ^{-1}(z)|=|\sqrt[3]{z}|=\phi ^{-1}(|z|)=\sqrt[3]{|z|}$ and $%
|\psi ^{-1}(w)|=|\sqrt[3]{w}|=\psi ^{-1}(|w|)=\sqrt[3]{|w|}$;

\item $\underset{t\rightarrow \pm \infty }{\lim }\frac{1}{a(t)} =\underset{%
t\rightarrow \pm \infty }{\lim }\frac{1}{1+t^4} =\underset{t\rightarrow \pm
\infty }{\lim }\frac{1}{b(t)}=0$.
\end{itemize}

For $\rho >0$ such that

\begin{eqnarray}
&&\int_{-\infty }^{+\infty }\phi ^{-1}\left( \frac{\int_{-\infty }^{+\infty
}\delta _{\rho }(r)dr}{a(s)}\right) ds  \notag \\
&=&\int_{-\infty }^{+\infty }\left( \sqrt[3]{\frac{\int_{-\infty }^{+\infty
}r^{4}\frac{\left( 2\left\vert \zeta \omega _{0}\right\vert \rho ^{3}+\omega
_{0}^{2}\rho +\gamma \rho ^{3}+3d^{2}\rho ^{2}+1\right) }{\left(
1+r^{6}\right) ^{2}}dr}{1+s^{4}}}\right) ds<\rho \;\;\,\;\;\;\;\;\;\;\,
\label{cap8rho11}
\end{eqnarray}%
and
\begin{eqnarray}
&&\int_{-\infty }^{+\infty }\psi ^{-1}\left( \frac{\int_{-\infty }^{+\infty
}\varepsilon _{\rho }(r)dr}{b(s)}\right) ds  \notag \\
&=&\int_{-\infty }^{+\infty }\left( \sqrt[3]{\frac{\int_{-\infty }^{+\infty
}r^{4}\frac{\left( 2\left\vert \zeta \omega _{0}\right\vert \rho ^{3}+\omega
_{0}^{2}\rho +\gamma d^{2}\rho ^{3}+3\rho ^{3}\right) }{\tau ^{2}\left(
1+r^{6}\right) ^{2}}dr}{1+s^{4}}}\right) ds<\rho ,\;\;\;\;\;\;\,\,
\label{cap8rho12}
\end{eqnarray}%
by Theorem \ref{main8}, the system (\ref{EQAp1}) together with the
asymptotic conditions (\ref{BCap}), has at least a pair $(q_{1},q_{2})\in
X^{2}$ of heteroclinic solutions, since the integrals (\ref{BAap}) and (\ref%
{CDap}) are finite. As example, in particular, for
\begin{equation*}
|\zeta |=\frac{1}{2\sqrt{1000}},\;|\omega _{0}|=\frac{1}{\sqrt{1000}}%
,\;\gamma =\frac{1}{1000},\;d^{2}=\frac{1}{3000},\;\tau=23
\end{equation*}%
the conditions (\ref{cap8rho11}) and (\ref{cap8rho12}) hold for $\rho >
6.3542$.

For the values of the above parameters, $A=10$ and $B=8$, the heteroclinics
solutions  $q_{1}$ and $q_{2}$ have the graphs given in Figure 1. In Figure
2 we present the real shape of the $q_{1}$ trajectory, which is not detailed in
Figure 1 due to the scale range.

\begin{figure}[h]
\label{Figura1} \center
\includegraphics[width=6cm]{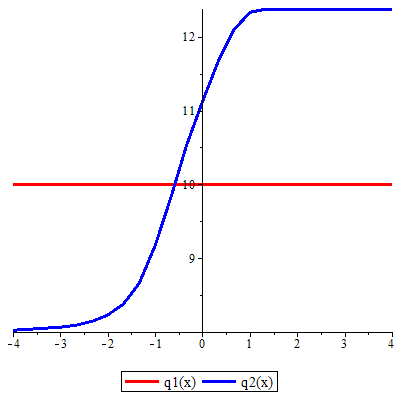}
\caption{A=10, B=8 }
\end{figure}
\begin{figure}[h]
\label{Figura2} \center
\includegraphics[width=6cm]{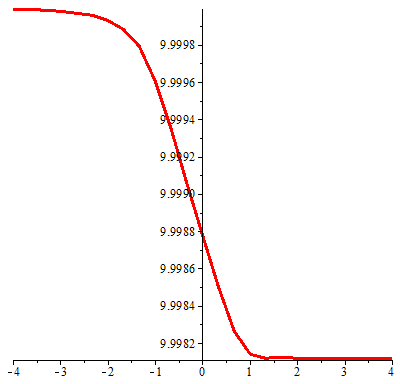}
\caption{A=10 }
\end{figure}

Remark that, if the integrals (\ref{BAap}) and (\ref{CDap}) are null, then
he system (\ref{EQAp1}) has a pair of homoclinic solutions $(q_{1},q_{2})\in
X^{2}.$

\end{document}